\title {Combinatorial properties of irreducible\\ Laguerre polynomials in two variables}
\author{Nikolai A. Krylov \\ ~ \\
Siena College, Department of Mathematics\\
515 Loudon Road, Loudonville NY 12211, USA\\ ~ \\
nkrylov@siena.edu}
\date {}
\begin{document}

\newtheorem{thm}{Theorem}
\newtheorem{lem}{Lemma}
\newtheorem{claim}{Claim}
\newtheorem{dfn}{Definition}
\newtheorem{cor}{Corollary}
\newtheorem{prop}{Proposition}
\newtheorem{example}{Example}

\def\natu 		{\mathbb N}
\def\inte 		{\mathbb Z}
\def\rati 		{\mathbb Q}
\def\real		{\mathbb R}
\def\GCD 		{{\rm gcd}}

\def\lla 		{\longleftarrow}
\def\lra 		{\longrightarrow}
\def\ra 		{\rightarrow}
\def\hra 		{\hookrightarrow}
\def\lmt 		{\longmapsto}

\maketitle

\parskip=3mm

\begin{abstract}
Following our earlier work, where doubly indexed and irreducible over $\rati$ two-variable Laguerre polynomials 
were introduced, we prove for such polynomials some recurrence formulas and obtain a generating function. In addition, 
we show how certain sums of such polynomials with a fixed total degree relate to some standard polynomials.
\end{abstract}

\noindent {\bf Keywords}: Laguerre polynomials; recurrence relation; generating function; orthogonality \\
{\bf 2010 Mathematics Subject Classification}: 33C45, 33C50, 05A15, 05A19.

\section{Introduction}

The generalized Laguerre polynomials in one variable can be defined for an arbitrary integer $n\geq 0$ and a non-negative 
integer parameter $\alpha $ by Rodrigues' relation (see, for example, \cite{Dunkl}, \S 1.4.2 or \cite{Szego}, \S 5.1, pages 101-103)  
$$
L_n^{(\alpha)} (x) = \frac{1}{n!} e^xx^{-\alpha} D^n\left(e^{-x}x^{n+\alpha}\right),~~~~\mbox{where}~~~~D^n:=\frac{d^n}{dx^n}.
$$
Expanding the definition of $L_n^{(\alpha)}(x)$ using the $n$-fold product rule and the Pochhammer symbol defined 
for all $x\in\real$ by
$$
(x)_0 = 1,~~~(x)_n = \prod\limits_{i=1}^n (x+i-1)~~~~\mbox{for all}~~n\in\natu,
$$
one immediately comes to the following explicit formulas:

\begin{equation}
\label{LaguerreD}
L_n^{(\alpha)} (x)  = \sum\limits_{j=0}^n\frac{(\alpha +1)_n\cdot (-n)_j}{(\alpha+1)_j \cdot n!}\cdot \frac{x^j}{j!} = 
\sum\limits_{j=0}^n\frac{(-1)^j}{j!} {n+\alpha \choose n - j} x^j
\end{equation}

\noindent Furthermore, for all $n\in \natu$ and $\alpha\in \natu\cup\{0\}$, such polynomials satisfy two well-known 
recurrence relations with $L_0^{(\alpha)}(x) = 1, ~ L_1^{(\alpha)}(x) = \alpha + 1 -x$:

\begin{equation}
\label{recurrence1}
(n+1)L_{n+1}^{(\alpha)} (x) = (2n+1-x+\alpha)L_n^{(\alpha)}(x) - (n+\alpha)L_{n-1}^{(\alpha)}(x),
\end{equation}

\noindent and

\begin{equation}
\label{recurrence2}
L_n^{(\alpha)} (x) = L_n^{(\alpha+1)}(x) - L_{n-1}^{(\alpha+1 )}(x).
\end{equation}

Sequence $\{L_n^{(\alpha)}(x)\}_{n=0}^{\infty}$ also has the following generating function (see \cite{Szego}, \S 5 for (1) - (4)).

\begin{equation}
\label{GenF1}
\sum\limits_{n=0}^{\infty} L_n^{(\alpha)}(x) t^n = \frac{e^{\frac{-xt}{1 - t}}}{(1-t)^{\alpha+1}}, ~~~ |t|<1.
\end{equation}

Another key property of $L_n^{(\alpha)}(x)~\forall n\in\natu$, is irreducibility over the rationals for certain values of $\alpha$ 
(\cite{Filaseta1}, \cite{Filaseta2}, \cite{Filaseta3}, \cite{Schur}, \cite{Sell}). If $\alpha \geq 2$, there are only finitely many examples of 
reducible Laguerre polynomials, e.g. $L_2^{(2)}(x) = 1/2(x-2)(x-6)$ (see \cite{Filaseta3} for details). Moreover, for a fixed 
$\alpha$, Laguerre polynomials $L_n^{(\alpha)}(x)$ are orthogonal over the interval $(0,\infty)$ with 
respect to the weight function $\omega(x) = e^{-x}x^{\alpha}$. That is 

\begin{equation}
\label{orthogonal}
\int\limits_{0}^{\infty} e^{-x}x^{\alpha} L_n^{(\alpha)}(x) L_m^{(\alpha)}(x) = \Gamma(\alpha+1)\binom{n+\alpha}{n}\delta_{m,n},
\end{equation}
where $m,n\in\natu\cup\{0\}$, and $\delta_{m,n}$ being the Kronecker delta  (see chapter 1 of \cite{Dunkl} or \S 5.1 of \cite{Szego}).

There are numerous examples of families of orthogonal polynomials in two or more variables, and certain properties of 
some multivariate Laguerre polynomials have been studied in \cite{Aktas}, \cite{Dattoli}, \cite{Dunkl}, and \cite{Suetin}. 
Polynomials considered in \cite{Aktas}, \cite{Dunkl} and \cite{Suetin} are defined as products of generalized Laguerre 
polynomials in single variables, and therefore reducible. Polynomials considered in \cite{Dattoli} are homogeneous and 
related to the classical Laguerre polynomials $L_n(x)$ via the identity ${\cal L}_n(x, y) =y^n\cdot L_n(x/y)$.

Instead of taking the product, one could use Rodrigues' relation with respect to the partial derivatives to introduce 
multivariable Laguerre polynomials. Following such approach, Krylov and Li obtained in \cite{Krylov}  multivariable 
Laguerre polynomials $L_{n_1,\ldots,n_r}(x_1,\ldots, x_r)$ irreducible over $\rati$. In particular, they proved that such polynomials 
in two variables satisfy a congruence property analogous to the one obtained by Carlitz for the Laguerre polynomials in 
one variable (see \cite{Krylov}, \S3,4 for details).

In this paper, like in \cite{Krylov}, we will concentrate on the two-variable case of $L_{n,m}(x,y)$. 
First we will prove three recurrence formulas for such polynomials, and then use one of the formulas to establish 
the following generating function for the sequence $\{L_{n,m}(x,y)\}_{n,m=0}^{\infty}$.

\begin{equation}
\label{GenF2}
\sum\limits_{n,m=0}^{\infty} L_{n,m}(x,y) s^n t^m = \frac{e^{\frac{-sx - ty}{1 - s - t}}}{1 - s - t}, ~~~~~ |s|+|t| < 1
\end{equation}

In the last section we use the above generating function to derive a few interesting formulas, which relate 
$L_{n,m}(x,y)$ to some classical polynomials. At the end we discuss the orthogonality of the sequence 
$\{L_n((x+y)/2)\}_{n=0}^{\infty}$.



\section{Recurrence formulas and generating function for $L_{n,m}(x,y)$}

Originally polynomials $L_{n,m}(x,y)$ have been introduced in \S2 of \cite{Krylov} via Rodrigues' relation. It was also 
shown in Theorem 1. of \cite{Krylov} that such polynomials can be given by the following explicit formulas for all integer 
$n,m\geq 0$. If one of the indices $n$ or $m$ is negative, we'll assume here that the polynomial is identically zero.

$$
L_{n,m}(x,y)  = \sum\limits_{i=0}^m\frac{(-1)^i}{i!} \cdot {m+n\choose m - i} \cdot L_n^{(i)}(x)\cdot y^i = 
\sum\limits_{s=0}^n\frac{(-1)^s}{s!}\cdot{n+m\choose n - s}\cdot L_m^{(s)}(y)\cdot x^s,~~~~~
\mbox{and}
$$
\begin{equation}
\label{LagXY}
L_{n,m}(x,y)  = \sum\limits_{i=0}^m\sum\limits_{s=0}^n\frac{(-1)^{i+s}}{i!\cdot s!}\cdot{m+n\choose m - i}
\cdot{n+i\choose n - s}\cdot x^s \cdot y^i
\end{equation}

Note: If we assume for a moment that the i-th power $(L_n(x)y)^i$ actually stands for the product 
of the Laguerre polynomial $L_n^{(i)}(x)$ with $y^i$ i.e. $(L_n(x)y)^i \equiv L_n^{(i)}(x)\cdot y^i$, then using (\ref{LaguerreD}), 
we could symbolically write our two-variable polynomials as compositions 
$$
L_{n,m}(x,y) = L_m^{(n)}\left(L_n(x) y\right) = L_n^{(m)}\left(L_m(y) x\right) = L_{m,n}(y,x).
$$

Classical Laguerre polynomial in one variable $L_n(x)$ can also be written as a hypergeometric series 
$$
L_n(x) = {}_1F_1(-n;1;x) =\sum\limits_{s=0}^{\infty} \frac{(-n)_s}{(1)_s}\frac{x^s}{s!},
$$
where we used the Pochhammer's symbol $(a)_s = a(a+1)\cdot\ldots\cdot(a-s+1)$ (see \S 5.2 of \cite{Szego}). 
Since 
$$
(-n)_s = (-n)(-n+1) \ldots (-n+s-1) = \frac{(-1)^s n!}{(n-s)!} ~ \mbox{and} ~ (-m)_i = \frac{(-1)^i m!}{(m-i)!},
$$
we can write the summand from the last equality in (\ref{LagXY}) as 
$$
\frac{(-1)^{i+s}}{i!\cdot s!}\cdot{m+n\choose m - i}
\cdot{n+i\choose n - s}\cdot x^s \cdot y^i = {n + m \choose n} \cdot \frac{(-n)_s (-m)_i}{(s+i)!} \cdot \frac{x^s}{s!}\frac{y^i}{i!}.
$$
Hence we can write our polynomial $L_{n,m}(x,y)$ as
$$
L_{n,m}(x,y) = \sum\limits_{s=0}^n\sum\limits_{i=0}^m {n +m \choose n} \cdot \frac{(-n)_s (-m)_i}{(s+i)!} 
\cdot \frac{x^s}{s!}\frac{y^i}{i!} = {n +m \choose n} \cdot \sum\limits_{s,i=0}^{\infty}  \frac{(-n)_s (-m)_i}{(1)_{s+i}}\frac{x^s}{s!}\frac{y^i}{i!},
$$
which is a presentation of $L_{n,m}(x,y)$ as one of the so-called Horn {\sl two-variable hypergeometric} series 
(see formula (21) on page 225 of \cite{Bateman}, \S 5.7.1.). Thus we also have the formula
\begin{equation}
\label{Horn}
L_{n,m}(x,y) = {n +m \choose n}\cdot  \Phi_2(-n, -m; 1; x, y).
\end{equation}

Now let us discuss the recurrence relations for $L_{n,m}(x,y)$.

\begin{thm}
The following two 6-term recurrence formulas hold for all real $x,y$, and all nonnegative integers $n$ and $m$.
\begin{equation}
\label{Recurr2}
\begin{split}
(n+1)L_{n+1,m+1}(x,y) = 2(n+1)L_{n+1,m}(x,y) + (2n+1 - x)L_{n,m+1}(x,y)\\ - (n+1)L_{n+1,m-1}(x,y)
- (2n+1 - x + y)L_{n,m}(x,y) - nL_{n-1,m+1}(x,y),
\end{split}
\end{equation}
\begin{equation}
\label{Recurr3}
\begin{split}
(m+1)L_{n+1,m+1}(x,y) =  (2m +1 - y)L_{n+1,m}(x,y) + 2(m+1)L_{n,m+1}(x,y) \\ - mL_{n+1,m-1}(x,y)
- (2m +1 + x - y)L_{n,m}(x,y) - (m+1)L_{n-1,m+1}(x,y),
\end{split}
\end{equation}
\begin{gather*}
L_{n,m}(x,y) = 0 ~ \mbox{if $nm < 0$}, ~  L_{0,0}(x,y)=1, ~ L_{1,0}(x,y) = L_1(x) = 1 - x, ~ L_{0,1}(x,y) = L_1(y) = 1 - y .
\end{gather*}
\end{thm}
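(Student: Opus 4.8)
The plan is to prove \eqref{Recurr2} only, since \eqref{Recurr3} then follows by the symmetry $L_{n,m}(x,y)=L_{m,n}(y,x)$ (immediate from the symmetric double sum \eqref{LagXY}): performing the substitution $(n,x)\leftrightarrow(m,y)$ in \eqref{Recurr2} and rewriting every $L_{a,b}(y,x)$ as $L_{b,a}(x,y)$ turns it verbatim into \eqref{Recurr3}. For the remaining identity I would work in the variable $y$, expanding each two-variable polynomial through the single-variable representation $L_{n,m}(x,y)=\sum_{i=0}^{m}\frac{(-1)^i}{i!}\binom{m+n}{m-i}L_n^{(i)}(x)\,y^i$. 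The key observation is that the functions $L_k^{(i)}(x)\,y^i$ with $k\in\{n-1,n,n+1\}$ and $i\ge 0$ are linearly independent (the factors $y^i$ separate distinct $i$, and for fixed $i$ the polynomials $L_{n-1}^{(i)},L_n^{(i)},L_{n+1}^{(i)}$ have distinct degrees), so it suffices to expand both sides in this basis and match coefficients. I will call the span of $\{L_{k}^{(i)}(x)\,y^i\}_i$ the $k$-th \emph{level}.

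Two elementary tools reduce everything to this basis. First, the one-variable recurrence \eqref{recurrence1} with parameter $\alpha=i$, rearranged to isolate $x\,L_n^{(i)}(x)$, gives $x\,L_n^{(i)}(x)=(2n+1+i)L_n^{(i)}(x)-(n+1)L_{n+1}^{(i)}(x)-(n+i)L_{n-1}^{(i)}(x)$; I would use this to remove the explicit factors of $x$ coming from the coefficients $2n+1-x$ in \eqref{Recurr2}, thereby redistributing those contributions across the three levels. Second, the term $-\,y\,L_{n,m}(x,y)$ raises the power of $y$ and must be reindexed; after shifting $i\mapsto i+1$ it produces $L_n^{(i-1)}(x)$, whose upper index no longer matches the power $y^i$, and here the upper-index recurrence \eqref{recurrence2} in the form $L_n^{(i-1)}=L_n^{(i)}-L_{n-1}^{(i)}$ realigns it into the $n$-th and $(n-1)$-st levels. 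With these substitutions the left-hand side $(n+1)L_{n+1,m+1}$ sits entirely in the $(n+1)$-st level, while the right-hand side acquires contributions on all three levels.

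Matching then splits into three scalar identities. At the $(n+1)$-st level the left-hand coefficient $(n+1)\binom{m+n+2}{m+1-i}$ must equal the sum of the right-hand contributions; after factoring out $(n+1)$ this collapses to two applications of Pascal's rule, namely $\binom{m+n+2}{m+1-i}=\binom{m+n+1}{m+1-i}+\binom{m+n+1}{m-i}$ and $\binom{m+n+1}{m-i}=\binom{m+n}{m-i}+\binom{m+n}{m-1-i}$. At the $n$-th and $(n-1)$-st levels the left-hand side contributes nothing, so the right-hand contributions must cancel identically; each cancellation again reduces to Pascal's rule together with the absorption $\frac{i}{i!}=\frac{1}{(i-1)!}$ used in the reindexing. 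I expect the main obstacle to be exactly the bookkeeping around the $-\,y\,L_{n,m}$ term: it is the only source of a mismatch between the upper index and the power of $y$, it is the sole place where \eqref{recurrence2} is needed, and keeping the signs and the shifted binomials $\binom{m+n}{m+1-i}$ straight while verifying the level-$n$ and level-$(n-1)$ cancellations is where errors are most likely to creep in. The boundary conventions ($L_{n,m}\equiv 0$ once an index is negative, and the listed values of $L_{0,0},L_{1,0},L_{0,1}$) are consistent with all of these manipulations, since the vanishing binomials automatically kill the out-of-range terms.
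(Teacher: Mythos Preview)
Your proposal is correct and follows essentially the same route as the paper: both expand every $L_{a,b}(x,y)$ through the representation $\sum_i\frac{(-1)^i}{i!}\binom{a+b}{b-i}L_a^{(i)}(x)\,y^i$, compare coefficients of $y^i$, and reduce the resulting one-variable relations using \eqref{recurrence1} (to absorb the $x$-factors) and \eqref{recurrence2} (to realign the $L_n^{(i-1)}$ produced by the $y\,L_{n,m}$ term), with the second identity obtained from the symmetry $L_{n,m}(x,y)=L_{m,n}(y,x)$. The only difference is organizational: the paper first regroups the six terms into the form \eqref{Star1} and simplifies each side to a common expression, whereas you track three ``levels'' $k\in\{n-1,n,n+1\}$ separately; the underlying identities (Pascal's rule and the two classical recurrences) are the same in both.
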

\begin{proof}
Using formulas (\ref{LagXY}), one readily obtains the identites for $L_{n,m}(x,y)$ with $0\leq n+m\leq 1$. 
To prove the relation (\ref{Recurr2}) we rewire it as 
\begin{equation}
\label{Star1}
\begin{split}
(n+1)L_{n+1,m+1}(x,y) - (2n+1 - x)L_{n,m+1}(x,y) + y\cdot L_{n,m}(x,y) + nL_{n-1,m+1}(x,y) \\
 = 2(n+1)L_{n+1,m}(x,y) - (2n+1 - x)L_{n,m}(x,y) - (n+1)L_{n+1,m-1}(x,y)
\end{split}
\end{equation}
and compare the coefficients of $y^i$ on the R.H.S. and L.H.S. of (\ref{Star1}). We will also need the following two 
identities, which are trivial to check for all $i\in \{0,\ldots, m+1\}$ (the second identity can be also proved 
by applying the first one twice).
\begin{equation}
\label{Binom}
\begin{split}
& {m + n + 1\choose m + 1 - i} = {m+n\choose m + 1 - i}\cdot\biggl(1 + \frac{m+1-i}{n+i}\biggr), ~~~ \mbox{and} \\
& {m + n + 2\choose m + 1 - i} = {m+n\choose m + 1 - i}\cdot\biggl(1 + \frac{(m+1-i)(2n+m+2+i)}{(n+i)(n+1+i)}\biggr)
\end{split}
\end{equation}
Using the first of three equalities for $L_{n,m}(x,y)$ in (\ref{LagXY}) we can write the L.H.S. of (\ref{Star1}) as follows.
\begin{gather*}
\sum\limits_{i=0}^{m+1}(n+1)\frac{(-1)^i}{i!} {m+n+2 \choose m + 1 - i} L_{n+1}^{(i)}(x)y^i - 
\sum\limits_{i=0}^{m+1}(2n+1-x)\frac{(-1)^i}{i!} {m+n+1 \choose m +1 - i} L_{n}^{(i)}(x)y^i \\ 
+ \sum\limits_{j=0}^{m}\frac{(-1)^j}{j!} {m+n \choose m - j} L_{n}^{(j)}(x)y^{j+1} + 
\sum\limits_{i=0}^{m+1}n\frac{(-1)^i}{i!} {m+n \choose m + 1 - i} L_{n-1}^{(i)}(x)y^i \\
= \sum\limits_{i=0}^{m+1}\frac{(-1)^i}{i!} y^i \cdot \biggl( (n+1){m+n+2 \choose m + 1 - i} L_{n+1}^{(i)}(x) - 
(2n+1-x){m+n+1 \choose m +1 - i} L_{n}^{(i)}(x)\\ 
- i {m+n \choose m + 1 - i} L_{n}^{(i-1)}(x) + n {m+n \choose m + 1 - i} L_{n-1}^{(i)}(x) \biggr)
\end{gather*}
Using (\ref{Binom}) and (\ref{recurrence1}) with $\alpha$ replaced by $i$, we rewrite this last sum as
\begin{gather*}
= \sum\limits_{i=0}^{m+1}\frac{(-1)^i}{i!} y^i {m+n\choose m + 1 - i} \biggl( iL_n^{(i)}(x) - iL_{n-1}^{(i)}(x) + 
\frac{(n+1)(m+1-i)(m+2n+2+i)}{(n+i)(n+1+i)}L_{n+1}^{(i)}(x) \\ 
- (2n+1-x)\frac{m+1-i}{n+i}L_n^{(i)}(x) - iL_n^{(i-1)}(x) \biggr),
\end{gather*}
which simplifies to 
\begin{equation}
\label{LHS1}
\sum\limits_{i=0}^{m}\frac{(-1)^i}{i!} y^i {m+n\choose m - i} \biggl(
\frac{(n+1)(m+2n+2+i)}{(n+1+i)}L_{n+1}^{(i)}(x) - (2n+1-x)L_n^{(i)}(x) \biggr),
\end{equation}
since 
$$
iL_n^{(i)}(x) - iL_{n-1}^{(i)}(x) = iL_n^{(i-1)}(x)~ \mbox{by (\ref{recurrence2}), and also} ~ 
{m+n\choose m +1 - i}\frac{m+1-i}{n+i} = {m+n\choose m - i}.
$$ 
Notice also that the sum in (\ref{LHS1}) runs now until $m$, since $(m+1-i)/(n+i) = 0$ when $i = m+1$. Further, if we 
apply a similar approach to the R.H.S. of (\ref{Star1}), we obtain

\begin{gather*}
\sum\limits_{i=0}^{m} 2(n+1)\frac{(-1)^i}{i!} {m+n+1 \choose m - i} L_{n+1}^{(i)}(x)y^i - 
\sum\limits_{i=0}^{m}(2n+1-x)\frac{(-1)^i}{i!} {m+n \choose m - i} L_{n}^{(i)}(x)y^i \\ 
- \sum\limits_{i=0}^{m-1}(n+1)\frac{(-1)^i}{i!} {m+n \choose m - 1 - i} L_{n+1}^{(i)}(x)y^i \\
= \sum\limits_{i=0}^{m-1}\frac{(-1)^i}{i!} y^i \left( \biggl( 2{m+n+1 \choose m - i} 
- {m+n \choose m - 1 - i} \biggr) (n+1)L_{n+1}^{(i)}(x)
- (2n+1-x){m+n \choose m - i}L_n^{(i)}(x)\right) \\ 
+ y^m\frac{(-1)^m}{m!}\left(2(n+1)L_{n+1}^{(m)}(x) - 
(2n+1-x)L_n^{(m)}(x)\right),
\end{gather*}
which simplifies to 
\begin{equation}
\label{RHS1}
\sum\limits_{i=0}^{m}y^i\frac{(-1)^i}{i!} {m+n \choose m - i} \biggl( \frac{(n+1)(2n+m+2+i)}{n+1+i} L_{n+1}^{(i)}(x)
- (2n+1-x) L_n^{(i)}(x)\biggr),
\end{equation}
because for all $i\in\{0,\ldots m\}$ we have
$$
2{m+n+1 \choose m - i} - {m+n \choose m - 1 - i} = {m+n \choose m - i}\frac{2n+m+2+i}{n+1+i}, ~ \mbox{and also} ~ 
\frac{2n+m+2+m}{n+1+m} =2.
$$
Since (\ref{LHS1}) and (\ref{RHS1}) are identical, the proof of the 6-term recurrence formula (\ref{Recurr2}) is finished. 
In a similar way, or using a symmetry argument, one can show the analogous formula (\ref{Recurr3}), 
where $(n+1)$ is replaced by $(m+1)$.
\end{proof}

Now, multiplying all terms in (\ref{Recurr2}) by $(m+1)$, all terms in (\ref{Recurr3}) by $(n+1)$, subtracting both sides 
of the corresponding results and moving two polynomials with the total degree n+m+1 to one side while 
keeping the rest with the total degree n+m on the other, gives

\begin{cor}
$L_{n,m}(x,y)$ satisfy the following 5-term recurrence relation $\forall x,y\in\real$, and $\forall n,m\in\natu$.
\begin{equation}
\label{Recurr4}
\begin{split}
(n+1)(y+1)L_{n+1,m}(x,y) - (m+1)(x+1)L_{n,m+1}(x,y) = (n+1)L_{n+1,m-1}(x,y)\\ 
- (m+1)L_{n-1,m+1}(x,y)  + \biggl((m+1)(y-1-x) - (n+1)(x-1-y)\biggr)L_{n,m}(x,y)
\end{split}
\end{equation}
\end{cor}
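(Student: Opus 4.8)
The plan is to carry out precisely the elimination outlined in the paragraph preceding the statement. First I would multiply every term of the recurrence (\ref{Recurr2}) by $(m+1)$ and every term of (\ref{Recurr3}) by $(n+1)$. Both left-hand sides then become the identical expression $(n+1)(m+1)L_{n+1,m+1}(x,y)$, so subtracting the second resulting identity from the first annihilates the single polynomial of total degree $n+m+2$ and produces a linear relation purely among the five polynomials $L_{n+1,m}$, $L_{n,m+1}$, $L_{n+1,m-1}$, $L_{n-1,m+1}$, and $L_{n,m}$, each of total degree at most $n+m+1$. The coefficient attached to each of these is simply the difference of the two corresponding coefficients on the right-hand sides of the multiplied recurrences.

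The next step is to simplify those five coefficients one at a time; four of them collapse at once after pulling out a common linear factor. For $L_{n+1,m}$ the coefficient is $2(n+1)(m+1) - (n+1)(2m+1-y) = (n+1)(y+1)$, and for $L_{n,m+1}$ it is $(m+1)(2n+1-x) - 2(n+1)(m+1) = -(m+1)(x+1)$. The two mixed terms are even easier: the coefficient of $L_{n+1,m-1}$ reduces to $-(n+1)(m+1) + m(n+1) = -(n+1)$, and that of $L_{n-1,m+1}$ to $(m+1)(n+1) - n(m+1) = (m+1)$.

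The one coefficient demanding genuine care—and thus the main, though still elementary, obstacle—is the one multiplying $L_{n,m}(x,y)$, namely
$$
(n+1)(2m+1+x-y) - (m+1)(2n+1-x+y).
$$
Expanding and cancelling the common $2mn$ contributions leaves $(m-n) + (x-y)(n+m+2)$, which one checks equals $-\bigl[(m+1)(y-1-x) - (n+1)(x-1-y)\bigr]$. Finally, I would transpose the two surviving degree-$(n+m+1)$ polynomials carrying the factors $(y+1)$ and $(x+1)$ to the left-hand side and the remaining terms to the right; this rearrangement reproduces (\ref{Recurr4}) verbatim, completing the proof.
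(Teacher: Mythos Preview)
Your proposal is correct and follows exactly the approach the paper indicates in the sentence preceding the corollary: multiply (\ref{Recurr2}) by $(m+1)$, (\ref{Recurr3}) by $(n+1)$, subtract to eliminate $L_{n+1,m+1}$, and then separate the degree-$(n+m+1)$ terms from the degree-$(n+m)$ terms. The paper gives no further detail, so your explicit verification of each of the five coefficients---including the only nontrivial one, $(n+1)(2m+1+x-y)-(m+1)(2n+1-x+y)=(m-n)+(n+m+2)(x-y)$---is precisely the computation the reader is expected to supply.
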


Our next formula is a key step in proving (\ref{GenF2}) therefore it is stated separately.

\begin{lem}
The following identity holds for all $x,y\in\real$ and $\forall n\in\natu\cup\{0\}$.
\begin{equation}
\label{GFstep1}
\sum\limits_{m=0}^{\infty} n!L_{n,m}(x,y)t^m = \frac{e^{\frac{-ty}{1-t}}}{(1-t)^{n+1}} n! L_n\left(x + \frac{ty}{1-t}\right), ~~ |t|<1.
\end{equation}
\end{lem}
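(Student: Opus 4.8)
The plan is to expand $L_{n,m}(x,y)$ by the first explicit formula in (\ref{LagXY}), namely
$$L_{n,m}(x,y)=\sum_{i=0}^{m}\frac{(-1)^i}{i!}\binom{m+n}{m-i}L_n^{(i)}(x)\,y^i,$$
insert it into the left side of (\ref{GFstep1}), and interchange the two summations. It is cleanest to regard the whole computation as an identity of formal power series in $t$: the coefficient of each $t^m$ involves only finitely many terms, so every rearrangement below is legitimate, and the closed form obtained at the end is analytic on $|t|<1$, which is what promotes the formal identity to the stated analytic one. After swapping the order of summation I would reindex by $m=i+k$ with $k\ge0$, using $\binom{m+n}{m-i}=\binom{n+i+k}{k}$ and $t^m=t^i t^k$.

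The sum over $k$ is then a negative-binomial series, which I would evaluate by the standard identity $\sum_{k\ge0}\binom{n+i+k}{k}t^k=(1-t)^{-(n+i+1)}$; the factor $t^i$ means the $i$-th term contributes nothing below degree $t^i$, confirming the finiteness remark above. Pulling out the common factor $(1-t)^{-(n+1)}$ and collecting powers of $t/(1-t)$ reduces the left side of (\ref{GFstep1}) to
$$\frac{n!}{(1-t)^{n+1}}\sum_{i=0}^{\infty}\frac{(-1)^i}{i!}L_n^{(i)}(x)\left(\frac{ty}{1-t}\right)^i.$$
Comparing with the claimed right side, the lemma reduces to the single auxiliary identity
$$\sum_{i=0}^{\infty}\frac{(-u)^i}{i!}L_n^{(i)}(x)=e^{-u}L_n(x+u),\qquad u=\frac{ty}{1-t},$$
which I will call $(\star)$.

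Establishing $(\star)$ is where I expect the real work to lie, since it concerns the upper index of the one-variable Laguerre polynomials rather than $L_{n,m}$ itself. My preferred route is to compare one-variable generating functions: multiply both sides of $(\star)$ by $w^n$ and sum over $n\ge0$, interchanging $\sum_n$ with $\sum_i$ on the left. Evaluating the inner sum over $n$ by (\ref{GenF1}) with parameter $\alpha=i$ turns the left side into $(1-w)^{-1}e^{-xw/(1-w)}\sum_i\frac{1}{i!}\left(\frac{-u}{1-w}\right)^i=(1-w)^{-1}e^{-xw/(1-w)}e^{-u/(1-w)}$, while (\ref{GenF1}) with $\alpha=0$ turns the right side into $e^{-u}(1-w)^{-1}e^{-(x+u)w/(1-w)}$. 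A short check of the exponents, $-u-\frac{uw}{1-w}=-\frac{u}{1-w}$, shows the two sides coincide, and equating coefficients of $w^n$ gives $(\star)$ for every $n$; the case $n=0$, where $(\star)$ collapses to $\sum_i(-u)^i/i!=e^{-u}$, is a useful sanity check. Substituting $(\star)$ with $u=ty/(1-t)$ back into the reduced expression yields precisely the right side of (\ref{GFstep1}). An alternative, more computational proof of $(\star)$ matches the coefficient of $x^j$ on each side using $\frac{d^j}{dx^j}L_n(x)=(-1)^jL_{n-j}^{(j)}(x)$, but the generating-function argument is shorter and relies only on (\ref{GenF1}).
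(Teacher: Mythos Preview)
Your proof is correct and takes a genuinely different route from the paper's. The paper argues by strong induction on $n$: the base case $n=0$ reduces to the classical generating function~(\ref{GenF1}) with $\alpha=0$, and the inductive step feeds the three-term recurrence~(\ref{recurrence1}) for $L_{n+1}(x+ty/(1-t))$ into the right side, invokes the hypothesis, and then verifies the resulting coefficient identity in $t$ term by term using the six-term recurrence~(\ref{Recurr2}) for $L_{n,m}$ that was just proved in Theorem~1. Your argument, by contrast, never touches~(\ref{Recurr2}) at all: you go straight from the explicit formula~(\ref{LagXY}), sum the negative binomial series in closed form, and reduce everything to the single-variable addition-type identity $(\star)$, which you dispatch cleanly with the generating function~(\ref{GenF1}). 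This is shorter and more transparent; it also shows that Lemma~1 is logically independent of Theorem~1, whereas in the paper's organization the lemma is presented as an application of that recurrence. The paper's approach, on the other hand, has the expository virtue of demonstrating that the somewhat heavy recurrence~(\ref{Recurr2}) actually does work. One small remark: your interchange of $\sum_n$ and $\sum_i$ in the proof of $(\star)$ is not a finite-coefficient manipulation in $w$ the way the earlier swap in $t$ was, but it is still justified because $L_n^{(i)}(x)$ grows only polynomially in $i$ for fixed $n,x$ (it is a polynomial in $i$ of degree $n$), so the double series converges absolutely for $|w|<1$.
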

\begin{proof}
We will use strong induction on $n$ and consider first $n=0$. Since $L_{0,m}(x,y) = L_m(y)$ and $L_0(x) = 1$, if we apply 
(\ref{GenF1}) with $\alpha=0$, we obtain when $|t|<1$ 
$$
 \frac{e^{\frac{-ty}{1-t}}}{1-t} = \sum\limits_{m=0}^{\infty} L_m(y)t^m = \sum\limits_{m=0}^{\infty} 0!L_{0,m}(x,y)t^m,
$$
which is equivalent (\ref{GFstep1}) with $n=0$. For the rest of this proof, when referring to the 
formulas (\ref{LaguerreD}) - (\ref{recurrence2}), I will assume $\alpha=0$ unless stated otherwise. 
Now assume that (\ref{GFstep1}) holds true for all natural numbers $k\leq n$. Multiplying both sides of (\ref{recurrence1}) 
by $n!$ and substituting for $x$ the term $x + \frac{ty}{1-t}$, we can write the right hand side of (\ref{GFstep1}) as

\begin{gather*}
\frac{e^{\frac{-ty}{1-t}}}{(1-t)^{n+2}} (n+1)! L_{n+1}\left(x + \frac{ty}{1-t}\right) \\
= \frac{e^{\frac{-ty}{1-t}}}{(1-t)^{n+2}}\Biggl(\left(2n+1-x-\frac{ty}{1-t}\right)n!L_n\left(x + \frac{ty}{1-t}\right) - 
n^2(n-1)!L_{n-1}\left( x + \frac{ty}{1-t}\right)\Biggr) \\
= \frac{e^{\frac{-ty}{1-t}}}{(1-t)^{n+1}}\Biggl(\frac{\left(2n+1-x-\frac{ty}{1-t}\right)n!}{1-t} L_n\left(x + \frac{ty}{1-t}\right) - 
\frac{n^2 (n-1)!}{1-t} L_{n-1}\left( x + \frac{ty}{1-t}\right)\Biggr) \\
= \frac{e^{\frac{-ty}{1-t}}}{(1-t)^{n+1}} n!  L_n\left(x + \frac{ty}{1-t}\right) \Biggl(\frac{2n+1-x}{1-t} -\frac{ty}{(1-t)^2}\Biggr) 
- \frac{e^{\frac{-ty}{1-t}}}{(1-t)^{n}} (n -1)!  L_{n-1}\left(x + \frac{ty}{1-t}\right) \frac{n^2}{(1-t)^2}.
\end{gather*}
Next apply the induction hypothesis and rewrite the last line as
$$
\sum\limits_{m=0}^{\infty} n!L_{n,m}(x,y)t^m \Biggl(\frac{2n+1-x}{1-t} -\frac{ty}{(1-t)^2}\Biggr) - 
\sum\limits_{m=0}^{\infty} (n-1)!L_{n-1,m}(x,y)t^m \frac{n^2}{(1-t)^2},
$$
which implies that to finish the proof it's enough to show that
\begin{equation}
\label{EQ1}
\begin{split}
& \sum\limits_{m=0}^{\infty} (n+1)!L_{n+1,m}(x,y)t^m(1-t)^2 \\
& = \sum\limits_{m=0}^{\infty} n!L_{n,m}(x,y)t^m \biggl(2n+1-x-t(2n+1+y-x)\biggr) - 
\sum\limits_{m=0}^{\infty} n^2(n-1)!L_{n-1,m}(x,y)t^m.
\end{split}
\end{equation}
To do it, let us again compare the coefficients of $t^k$ on both sides of (\ref{EQ1}) and use the first recurrence  
formula (\ref{Recurr2}). Multiplying out $(1-t)^2$ on the left-hand side of (\ref{EQ1}) gives
\begin{gather*}
\sum\limits_{m=0}^{\infty} (n+1)!L_{n+1,m}(x,y)t^m - \sum\limits_{m=0}^{\infty} 2(n+1)!L_{n+1,m}(x,y)t^{m+1} +  
\sum\limits_{m=0}^{\infty} (n+1)!L_{n+1,m}(x,y)t^{m+2}\\
= (n+1)!L_{n+1,0}(x,y) + (n+1)!L_{n+1,1}(x,y)t -  2(n+1)!L_{n+1,0}(x,y) t \\
+ \sum\limits_{m=0}^{\infty} \biggl( (n+1)!L_{n+1,m+2}(x,y)t^{m+2} - 2 (n+1)!L_{n+1,m+1}(x,y)t^{m+2} + 
(n+1)!L_{n+1,m}(x,y)t^{m+2}\biggr),
\end{gather*}
which equals
\begin{equation}
\label{EQ2}
\begin{split}
& (n+1)!L_{n+1}(x) + \biggl( (n+1)!L_{n+1,1}(x,y) - 2(n+1)!L_{n+1}(x)\biggr)t \\
& + \sum\limits_{m=0}^{\infty} (n+1)!\biggl(L_{n+1,m+2}(x,y) - 2 L_{n+1,m+1}(x,y) + L_{n+1,m}(x,y) \biggr)t^{m+2}.
\end{split}
\end{equation}
On the right-hand side of (\ref{EQ1}) we correspondingly obtain
\begin{equation}
\label{EQ3}
\begin{split}
& n!\biggl(L_{n,0}(x,y)(2n+1-x) - nL_{n-1,0}(x,y)\biggr) \\ 
& + n!\biggl(L_{n,1}(x,y)(2n+1-x) - L_{n,0}(x,y)(2n+1+ y -x) - nL_{n-1,1}(x,y)\biggr)t \\
& + \sum\limits_{m=0}^{\infty} n! \biggl(L_{n,m+2}(x,y)(2n + 1 - x) - L_{n,m+1}(x,y)(2n + 1 + y - x) 
-  nL_{n-1,m+2}(x,y)\biggr)t^{m+2}.
\end{split}
\end{equation}
Equality of free terms of (\ref{EQ2}) and (\ref{EQ3}) follows from (\ref{recurrence1}), and equality of the coefficients of $t^k$ 
for $k\geq 2$ follows exactly from our formula (\ref{Recurr2}) with $m+1$ substituted for $m$. As for the coefficients of $t$, one could 
prove their equality directly from the defining formulas (\ref{LagXY}). However, since $L_{n+1,-1}(x,y)$ is identically zero, 
assuming $m = 0$ in (\ref{Recurr2}) we also have 
\begin{gather*}
(n+1)L_{n+1,1}(x,y) = 2(n+1)L_{n+1,0}(x,y)\\ + (2n+1 - x)L_{n,1}(x,y) - (2n+1 - x + y)L_{n,0}(x,y) - nL_{n-1,1}(x,y),
\end{gather*}
which implies the required equality for the coefficients of $t$ and finishes our proof of (\ref{GFstep1}).
\end{proof}


Let us now prove formula (\ref{GenF2}).

\begin{thm}
Doubly indexed sequence of Laguerre polynomials $L_{m,n}(x,y)$ with $n,m\geq 0$ has the following generating function.
\begin{equation}
\label{genfun2}
\sum\limits_{n,m=0}^{\infty} L_{n,m}(x,y) s^n t^m = \frac{e^{\frac{-sx - ty}{1 - t - s}}}{1 - t - s}, ~~~~~ |s| + |t| < 1
\end{equation}
\end{thm}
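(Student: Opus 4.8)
The plan is to evaluate the double series iteratively, collapsing the sum over $m$ first by the Lemma we just proved, and then collapsing the sum over $n$ by the one-variable generating function (\ref{GenF1}). First I would interchange the order of summation and write
$$
\sum\limits_{n,m=0}^{\infty} L_{n,m}(x,y) s^n t^m = \sum\limits_{n=0}^{\infty} s^n \biggl( \sum\limits_{m=0}^{\infty} L_{n,m}(x,y) t^m \biggr).
$$
This rearrangement needs a word of justification; I would obtain it as a by-product of the convergence check below, which shows that on the domain $|s|+|t|<1$ the iterated series converges absolutely, so reordering is legitimate.

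Next, dividing both sides of (\ref{GFstep1}) by $n!$ and inserting the result into the inner sum gives
$$
\sum\limits_{m=0}^{\infty} L_{n,m}(x,y) t^m = \frac{e^{\frac{-ty}{1-t}}}{(1-t)^{n+1}} L_n\biggl( x + \frac{ty}{1-t} \biggr),
$$
so the whole expression becomes
$$
\frac{e^{\frac{-ty}{1-t}}}{1-t} \sum\limits_{n=0}^{\infty} \biggl( \frac{s}{1-t} \biggr)^n L_n\biggl( x + \frac{ty}{1-t} \biggr).
$$
The key observation is that the remaining sum over $n$ is precisely a one-variable Laguerre generating series in the auxiliary variable $u = s/(1-t)$ evaluated at the argument $X = x + ty/(1-t)$. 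Before applying (\ref{GenF1}) with $\alpha = 0$ I would verify $|u|<1$: since $|1-t| \geq 1 - |t| > |s|$ on our domain, we have $|s/(1-t)| < 1$, which simultaneously secures convergence of the $n$-series and the absolute convergence needed for the interchange in the first paragraph.

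Applying (\ref{GenF1}) turns the sum over $n$ into $\tfrac{1}{1-u}\exp\bigl(\tfrac{-Xu}{1-u}\bigr)$ with $u = s/(1-t)$ and $X = x + ty/(1-t)$. Using $1 - u = (1-t-s)/(1-t)$, the factor $1/(1-u)$ contributes $(1-t)/(1-t-s)$, which cancels the leading $1/(1-t)$ and leaves $1/(1-t-s)$ out front, already matching the required denominator. It then remains to assemble the two exponentials. The $x$-part is immediate, since $X$ contributes $-sx/(1-t-s)$ to the exponent. The main computational step, and the only place where care is needed, is combining the two $y$-terms: the prefactor's $-ty/(1-t)$ and the $y$-part of $-Xu/(1-u)$, which equals $-sty/\bigl((1-t)(1-t-s)\bigr)$. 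Factoring $-ty/(1-t)$ out of their sum and simplifying $1 + s/(1-t-s) = (1-t)/(1-t-s)$ collapses them to $-ty/(1-t-s)$. The two contributions then merge into the single exponent $-(sx+ty)/(1-t-s)$, producing exactly (\ref{genfun2}). I expect this bookkeeping in the exponent to be the only genuine obstacle; the rest is a direct application of the Lemma and (\ref{GenF1}).
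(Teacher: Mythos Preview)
Your argument is correct and is, in fact, cleaner than the route the paper takes. Both proofs rest on Lemma~1, but they use it in opposite directions. The paper works from the closed form on the right: it establishes by induction the derivative identity
\[
\frac{\partial^n}{\partial s^n}\left(\frac{e^{\frac{-xs-yt}{1-s-t}}}{1-s-t}\right)=\frac{e^{\frac{-xs-yt}{1-s-t}}}{(1-s-t)^{n+1}}\,n!\,L_n\!\left(\frac{(1-t)x+ty}{1-s-t}\right),
\]
whose inductive step needs the differentiation formulas $dL_n^{(\alpha)}/dx=-L_{n-1}^{(\alpha+1)}$ and $L_{n-1}^{(1)}+L_n=L_n^{(1)}$, and then evaluates at $s=0$ so that Lemma~1 reads off the Taylor coefficients. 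You instead work from the series side: collapse the $m$-sum by Lemma~1, recognise the remaining $n$-sum as an instance of the classical generating function (\ref{GenF1}) in the auxiliary variable $u=s/(1-t)$, and do a short bookkeeping of the exponent. Your route is shorter and uses only Lemma~1 and (\ref{GenF1}); it bypasses the inductive derivative computation entirely. The one point that is slightly thin is the blanket claim of absolute convergence of the double series---knowing $|u|<1$ gives convergence of the iterated sum you actually compute, but does not by itself justify the initial reordering; a cleaner way is either to bound $|L_{n,m}(x,y)|$ via the explicit formula, or simply to run your calculation as an identity of formal power series in $s,t$ and invoke $|s|+|t|<1$ only at the end for the analytic statement.
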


\begin{proof}
The proof is based on the previous lemma and the formulas (5.1.13) and (5.1.14) from \cite{Szego}, \S 5.1. The 
last one states 
$$
\frac{d}{dx}L_n^{(\alpha)}(x) = - L_{n-1}^{(\alpha+1)}(x) = x^{-1}\biggl(nL_n^{(\alpha)}(x) - (n+\alpha)L_{n-1}^{(\alpha)}(x)\biggr),
$$
which in particular, implies
\begin{equation}
\label{Eq13}
(n+1)L_{n+1}(x) = (n+1)L_n(x) - xL_n^{(1)}(x).
\end{equation}
Assuming for a moment 
\begin{equation}
\label{DER}
\frac{\partial ^n}{\partial s^n}\left(\frac{e^{\frac{-xs-yt}{1-s-t}}}{1-s-t}\right) = \frac{e^{\frac{-xs-yt}{1-s-t}}}{(1-s-t)^{n+1}}n!L_n\left(\frac{(1-t)x + ty}{1-s-t}\right),
\end{equation}
substituting 0 for $s$ and applying our Lemma 1. will give 
$$
\left.\frac{\partial ^n}{\partial s^n}\right|_{s=0} \left(\frac{e^{\frac{-xs-yt}{1-s-t}}}{1-s-t}\right)
= \frac{e^{\frac{-yt}{1-t}}}{(1-t)^{n+1}}n!L_n\left(x + \frac{ty}{1-t}\right) = \sum\limits_{m=0}^{\infty} n!L_{n,m}(x,y)t^m.
$$
Then differentiating $m$ times with respect to $t$ and evaluating at $t=0$ will prove the result
$$
\left.\frac{\partial ^{n+m}}{\partial s^n\partial t^m}\right|_{s=0, t=0} \left(\frac{e^{\frac{-xs-yt}{1-s-t}}}{1-s-t}\right)
= n!m! L_{n,m}(x,y).
$$
Thus to finish the proof, we need to establish (\ref{DER}), which we will do by induction on $n$. The base of induction 
when $n=0$ is trivial, so let's assume that (\ref{DER}) holds true and show that the formula remains true when we 
differentiate $n+1$ times. Using the induction hypothesis 
together with the product rule and the first equality from (5.1.14), \S 5.1 of  \cite{Szego} we obtain
\begin{gather*}
\frac{\partial ^{n+1}}{\partial s^{n+1}}\left(\frac{e^{\frac{-xs-yt}{1-s-t}}}{1-s-t}\right) = -n!L_{n-1}^{(1)}\left(\frac{(1-t)x+ty}{1-s-t}\right)\cdot 
\frac{(1-t)x+ty}{(1-s-t)^2}\cdot \frac{e^{\frac{-xs-yt}{1-s-t}}}{(1-s-t)^{n+1}} \\
+ n!L_n\left(\frac{(1-t)x + ty}{1-s-t}\right)\cdot\left(\frac{e^{\frac{-xs-yt}{1-s-t}}(n+1)}{(1-s-t)^{n+2}} + 
\frac{e^{\frac{-xs-yt}{1-s-t}}}{(1-s-t)^{n+1}} \frac{xt-x-yt}{(1-s-t)^2} \right)\\
= \frac{e^{\frac{-xs-yt}{1-s-t}}(n+1)!}{(1-s-t)^{n+2}} L_n\left(\frac{(1-t)x+ty}{1-s-t}\right) \\ 
- \frac{e^{\frac{-xs-yt}{1-s-t}}(n+1)!}{(1-s-t)^{n+2}}\left[ 
\frac{(1-t)x+ty}{(n+1)(1-s-t)}\left(L_{n-1}^{(1)}\left(\frac{(1-t)x+ty}{1-s-t}\right) + L_n\left(\frac{(1-t)x+ty}{1-s-t}\right)  \right)\right].
\end{gather*}
Applying $L_{n-1}^{(1)}(x) + L_n(x) = L_n^{(1)}(x)$ (see \cite{Szego}, (5.1.13)), we can write the last part of the formula as
\begin{gather*}
\frac{e^{\frac{-xs-yt}{1-s-t}}n!}{(1-s-t)^{n+2}}\biggl((n+1)L_n\left(\frac{(1-t)x+ty}{1-s-t}\right) - 
\frac{(1-t)x+ty}{1-s-t} L_n^{(1)}\left(\frac{(1-t)x+ty}{1-s-t}\right)\biggr),
\end{gather*}
which via (\ref{Eq13}), also equals 
$$
\frac{e^{\frac{-xs-yt}{1-s-t}}(n+1)!}{(1-s-t)^{n+2}}L_{n+1}\left(\frac{(1-t)x+ty}{1-s-t}\right).
$$ 
This finishes our induction proof of (\ref{DER}).
\end{proof}


\section{Other properties of $L_{n,m}(x,y)$}

For each positive integer $k$ there are exactly $k+1$ polynomials $L_{n,m}(x,y)$ of the total degree $k = n+m$. 
If we add all such polynomials together, we will obtain a constant multiple of the classical 
Laguerre polynomial $L_k((x+y)/2)$. The corresponding alternating sum will give us a constant multiple of $(x-y)^k$. 
Here are precise statements, where we assume $x\neq 0$ and $y\neq 0$, in (\ref{xSum}) and (\ref{xySum}).

\begin{thm}
For every fixed $k\in\natu$, we have the identities
\begin{gather}
\label{Sum}
\sum\limits_{n+m=k} L_{n,m}(x,y) = 2^kL_k\biggl(\frac{x+y}{2}\biggr),\\
\label{ASum}
\sum\limits_{n+m=k} (-1)^m L_{n,m}(x,y) = \frac{1}{k!} (y-x)^k,\\
\label{xSum}
\sum\limits_{n+m=k} x^m L_{n,m}(x,1/x) =  L_k(1)(1+x)^k,\\
\label{xySum}
\sum\limits_{n+m=k} x^n y^m L_{n,m}(1/x,-1/y) =  (x + y)^k
\end{gather}
\end{thm}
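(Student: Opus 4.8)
The plan is to read off each of the four sums as a single Taylor coefficient of the generating function (\ref{genfun2}). The common device is this: if we substitute $s = au$ and $t = bu$ for fixed scalars $a,b$ (possibly depending on $x$ or $y$), then the left-hand side of (\ref{genfun2}) becomes
$$
\sum_{n,m=0}^\infty L_{n,m}(x,y)\, a^n b^m u^{n+m} = \sum_{k=0}^\infty \Bigl(\sum_{n+m=k} a^n b^m L_{n,m}(x,y)\Bigr) u^k,
$$
so that the coefficient of $u^k$ is exactly the weighted total-degree-$k$ sum we want. Since (\ref{genfun2}) is valid for $|s|+|t|<1$, the substitution is legitimate for all sufficiently small $|u|$, and comparing coefficients of $u^k$ on the two sides is an equality of genuine power series. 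The work then reduces to simplifying the right-hand side of (\ref{genfun2}) under each substitution and recognizing the resulting one-variable series.

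First I would prove (\ref{Sum}) and (\ref{ASum}). For (\ref{Sum}) I set $s=t=u$; the exponent collapses to $-u(x+y)/(1-2u)$ and the prefactor to $1/(1-2u)$, and setting $\tau = 2u$ turns the right-hand side into the one-variable generating function (\ref{GenF1}) (with $\alpha=0$) evaluated at $(x+y)/2$, giving $\sum_k 2^k L_k((x+y)/2)\,u^k$. For (\ref{ASum}) I set $s=u$, $t=-u$; now the denominator $1-s-t$ becomes identically $1$, the exponent becomes $u(y-x)$, and the right-hand side is simply
$$
e^{u(y-x)} = \sum_{k=0}^\infty \frac{(y-x)^k}{k!}\,u^k.
$$
In both cases matching the coefficient of $u^k$ yields the claimed identity.

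Next I would prove (\ref{xSum}) and (\ref{xySum}), where the hypotheses $x\neq 0$, $y\neq 0$ are used. For (\ref{xSum}) I take the second argument to be $y=1/x$ and substitute $s=u$, $t=xu$, so that $a^n b^m = x^m$; the exponent simplifies to $-u(x+1)/(1-u(x+1))$ and the prefactor to $1/(1-u(x+1))$, which is (\ref{GenF1}) with $\alpha=0$ evaluated at argument $1$ with $\tau = (1+x)u$, namely $\sum_k L_k(1)(1+x)^k u^k$. For (\ref{xySum}) I take the arguments to be $(1/x,-1/y)$ and substitute $s=xu$, $t=yu$, so that $a^n b^m = x^n y^m$; here the two terms in the exponent numerator cancel to give $0$, the prefactor becomes $1/(1-u(x+y))$, and expanding this geometric series gives $\sum_k (x+y)^k u^k$. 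Again matching the coefficient of $u^k$ finishes each identity.

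The computations themselves are short; the only real work is choosing the correct linear substitution $s=au$, $t=bu$ for each identity so that both the weight $a^n b^m$ and the simplified exponential come out right, and then recognizing the univariate series. The one point requiring care is the legitimacy of substituting and extracting coefficients: I would note that both sides of (\ref{genfun2}) are analytic in a neighborhood of the origin, so the substituted identity holds as an equality of convergent power series in $u$ for small $|u|$, whence the coefficients of $u^k$ agree for every $k$. No single step is a genuine obstacle, but the cancellations in the exponent for (\ref{ASum}) and (\ref{xySum}) — where $1-s-t$ reduces to a constant, or the numerator vanishes — are the pleasant simplifications that make those two right-hand sides elementary rather than Laguerre.
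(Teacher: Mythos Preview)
Your proposal is correct and follows essentially the same approach as the paper's own proof: both substitute $s=au$, $t=bu$ (with the same choices of $a,b$ in each case) into the generating function (\ref{genfun2}) and then identify the resulting one-variable series via (\ref{GenF1}), a Taylor expansion, or a geometric series. Your presentation is slightly more unified in framing all four identities through the single device $(s,t)=(au,bu)$, but the substitutions, simplifications, and coefficient comparisons are identical to the paper's.
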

\begin{proof}
To prove (\ref{Sum}), we will use the generating functions for $L_k(x)$ and $L_{n,m}(x,y)$. Since 
$$
\sum\limits_{k=0}^{\infty} L_k(x)t^k = \frac{e^{\frac{-tx}{1-t}}}{1-t}, ~~~ \mbox{we have} ~~~ 
\sum\limits_{k=0}^{\infty} L_k\biggl(\frac{x+y}{2}\biggr)t^k = \frac{e^{\frac{-t(x+y)/2}{1-t}}}{1-t}, ~ |t|<1,
$$
which after the substitution $s=t/2$ becomes
$$
\sum\limits_{k=0}^{\infty} 2^kL_k\biggl(\frac{x+y}{2}\biggr)s^k = \frac{e^{\frac{-s(x+y)}{1-2s}}}{1-2s}, ~ |s|<1/2.
$$
On the other hand, we now have (\ref{genfun2}), and if we make there $s=t$ and let $m=k - n$, we obtain 
$$
\sum\limits_{k=0}^{\infty}\biggl(\sum\limits_{n=0}^k L_{n, k-n}(x,y)\biggr) s^k = 
\sum\limits_{n,m=0}^{\infty} L_{n,m}(x,y) \cdot s^{n+ m} = 
\frac{e^{\frac{-s(x +y)}{1 - 2s}}}{1 - 2s} = \sum\limits_{k=0}^{\infty} 2^kL_k\biggl(\frac{x+y}{2}\biggr)s^k,
$$
which give (\ref{Sum}). Similarly, if we make $t = -s$ in (\ref{genfun2}), we obtain the following.
$$
\sum\limits_{k=0}^{\infty}\biggl(\sum\limits_{n=0}^k (-1) ^{k-n}L_{n, k-n}(x,y)\biggr) s^k = 
\sum\limits_{n,m=0}^{\infty} (-1)^m L_{n,m}(x,y) \cdot s^{n+ m} = e^{s(y - x)}.
$$
Furthermore, using Taylor's expansion we have 
$$
e^{s(y - x)} = \sum\limits_{k=0}^{\infty} \frac{1}{k!}(s(y-x))^k =  \sum\limits_{k=0}^{\infty} \biggl(\frac{(y-x)^k}{k!}\biggr)s^k,
$$
which implies (\ref{ASum}). The last two identities are proved in the same manner. Indeed, first substituting 
$t = xs$ and $y = 1/x$ in the generating function (\ref{genfun2}) for $L_{n,m}(x,y)$ gives
\begin{equation}
\label{xSum2}
\sum\limits_{n,m=0}^{\infty} L_{n,m}(x,y)s^nt^m = \sum\limits_{k=0}^{\infty} 
\left(\sum\limits_{n=0}^k L_{n,k-n}\left(x, 1/x\right) \cdot x^{k-n} \right)s^k = \frac{e^{\frac{-sx-s}{1-s-sx}}}{1-s-sx}.
\end{equation}
Now, using the generating function (\ref{GenF1}) for $L_n(x)$ and the substitution $z = s + sx$, we obtain
$$
 \frac{e^{\frac{-sx-s}{1-s-sx}}}{1-s-sx} = \frac{e^\frac{-z}{1-z}}{1-z} = \sum\limits_{k=0}^{\infty} L_k(1) z^k = 
 \sum\limits_{k=0}^{\infty} \biggl(L_k(1) (x+1)^k\biggr)s^k, 
$$
which together with (\ref{xSum2}) proves (\ref{xSum}). To prove (\ref{xySum}) use the substitutions $s = zx$ and 
$t = zy$ in (\ref{genfun2}) with $L_{n,m}(1/x, -1/y)$. It will give the geometric series $1/(1 - (x+y)z)$, 
which is of course the generating function for $(x+y)^k$.
\end{proof}

Identity (\ref{Sum}) suggests various interesting properties for such sums of polynomials $L_{n,m}(x,y)$. One 
of such properties is the orthogonality relation. We are going to discuss it next, but first let's introduce a separate 
name for such sums.
\begin{dfn} For each $k\in\natu\cup\{0\}$ and all $x,y\in\real$, define
$$
LS_k(x,y):=\frac{1}{2^k}\cdot \sum\limits_{n+m=k} L_{n,m}(x,y) = L_k\biggl(\frac{x+y}{2}\biggr).
$$
\end{dfn}

Here is the corresponding orthogonality property for polynomials $LS_k(x,y)$. 

\begin{prop}
Polynomials $LS_k(x,y)$ are orthonormal over the open first quadrant in the plane 
$\{(x,y)\in\real^2 ~|~ x>0 ~\mbox{and} ~ y >0\}$ 
with respect to the weight function 
$$
\omega(x,y) = \frac{e^{\frac{-x-y}{2}}}{2\pi\sqrt{xy}}.
$$
In particular, for all $m,n\in\natu\cup\{0\}$ we have the following identity.
\begin{equation}
\int\limits_0^{\infty}\int\limits_0^{\infty} \frac{e^{\frac{-x-y}{2}}}{2\pi\sqrt{xy}}
\cdot LS_{n}(x,y)\cdot LS_{m}(x,y) ~dxdy = \delta_{n,m}
\end{equation}
\end{prop}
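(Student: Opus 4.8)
The plan is to reduce the two-dimensional integral to the classical one-variable Laguerre orthogonality relation (\ref{orthogonal}) specialized to $\alpha = 0$, where $\Gamma(1)\binom{n}{n} = 1$ and hence $\int_0^\infty e^{-u}L_n(u)L_m(u)\,du = \delta_{n,m}$. By the definition of $LS_k(x,y)$ I first replace each factor by $L_k\!\left(\frac{x+y}{2}\right)$, so that the integrand depends on $x$ and $y$ only through the sum $x+y$ apart from the factor $(xy)^{-1/2}$. The key observation is that the weight $\omega(x,y)$ factors as the product of two copies of the density $\frac{1}{\sqrt{2\pi}}\,x^{-1/2}e^{-x/2}$, and that the substitution rationalizing $\sqrt{xy}$ will simultaneously convert $e^{-(x+y)/2}$ into the exponential Laguerre weight $e^{-u}$ once a radial variable is introduced.

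Concretely, I would substitute $x = a^2$, $y = b^2$ with $a,b>0$, so that $dx\,dy = 4ab\,da\,db$ and $\sqrt{xy}=ab$; the factor $ab$ cancels and the integral becomes
\[
\frac{2}{\pi}\int_0^\infty\!\!\int_0^\infty e^{-\frac{a^2+b^2}{2}}\,L_n\!\left(\tfrac{a^2+b^2}{2}\right)L_m\!\left(\tfrac{a^2+b^2}{2}\right)da\,db.
\]
Next I would pass to polar coordinates $a = r\cos\theta$, $b = r\sin\theta$ over the first quadrant $r>0$, $0<\theta<\pi/2$. Since the integrand depends only on $a^2+b^2 = r^2$, the angular integral contributes exactly the factor $\pi/2$, which cancels the constant $2/\pi$ and leaves the single integral $\int_0^\infty e^{-r^2/2}\,L_n(r^2/2)L_m(r^2/2)\,r\,dr$.

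Finally, the substitution $u = r^2/2$, $du = r\,dr$, turns this into $\int_0^\infty e^{-u}L_n(u)L_m(u)\,du$, which equals $\delta_{n,m}$ by (\ref{orthogonal}) with $\alpha=0$, proving both the orthogonality and the normalization claimed. I do not expect a genuine obstacle here: the argument is essentially bookkeeping of constants, and the only point demanding care is confirming that the normalizing constant $1/(2\pi)$ in $\omega$ is precisely the one that survives, i.e. that the factor $4$ produced by $x=a^2$, $y=b^2$, the constant $1/(2\pi)$, and the angular factor $\pi/2$ multiply to $1$. It is worth remarking in passing that this computation is the probabilistic statement that if $X$ and $Y$ are independent $\chi^2_1$ variables then $(X+Y)/2$ is standard exponential, whose density $e^{-u}$ is exactly the weight for the classical Laguerre family $\{L_n\}$.
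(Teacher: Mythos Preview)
Your proof is correct. Both your argument and the paper's reduce the double integral to the one-variable orthogonality relation $\int_0^\infty e^{-u}L_n(u)L_m(u)\,du=\delta_{n,m}$, but the changes of variables differ. The paper goes in a single step, setting $u=(x+y)/2$ and $v=x/y$; the Jacobian $4u/(v+1)^2$ combines with $\sqrt{xy}=2u\sqrt{v}/(v+1)$ so that the integral factors as $\frac{1}{\pi}\int_0^\infty \frac{dv}{\sqrt{v}(v+1)}\cdot\int_0^\infty e^{-u}L_n(u)L_m(u)\,du$, and the $v$-integral evaluates to $\pi$. Your route---first $x=a^2$, $y=b^2$ to clear the square root, then polar coordinates to exploit radial dependence, then $u=r^2/2$---is a three-step composition of the same map up to a reparametrisation of the ``angular'' coordinate ($v=\cot^2\theta$), and is arguably more transparent since each substitution is a standard manoeuvre with an obvious motivation. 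The probabilistic remark that two independent $\chi^2_1$ variables sum to a $\chi^2_2=\mathrm{Exp}(1/2)$ variable, so that $(X+Y)/2$ has density $e^{-u}$, is a pleasant gloss that the paper does not mention.
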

\begin{proof}
If we use the change of variables $u=(x+y)/2$, $v=x/y$, the corresponding transformation will be a diffeomorphism 
of the first quadrant onto itself, with the Jacobian
$$
\begin{vmatrix}
x_u & x_v \\
y_u & y_v\\
\end{vmatrix} = 
\begin{vmatrix}
\frac{2v}{v+1} & \frac{2u(v+1) - 2uv}{(v+1)^2} \\
\frac{2}{v+1} & \frac{-2u}{(v+1)^2}\\
\end{vmatrix} = \frac{4u}{(v+1)^2}.
$$
After this change our double integral can be written as the product
$$
\frac{1}{\pi} \int\limits_0^{\infty} \frac{dv}{\sqrt{v}(v+1)} \cdot \int\limits_0^{\infty} e^{-u} L_{n}(u)L_{m}(u) ~du = 
\int\limits_0^{\infty} e^{-u} L_{n}(u)L_{m}(u) ~du = \delta_{n,m},
$$
which gives the required result since the classical Laguerre polynomials $\{L_k(x)\}_{k=0}^{\infty}$ are orthonormal 
over $(0,\infty)$ with respect to the weight function $e^{-x}$ (see (\ref{orthogonal})).
\end{proof}




\begin{thebibliography}{99}

\bibitem{Aktas} Akta\c{s}, R.; Erku\c{s}-Duman, E.: {\it ``The Laguerre polynomials in several variables''}. 
Math. Slovaca 63 (2013), no. 3, 531 - 544.


\bibitem{Dattoli} Dattoli, G.; Torre, A.: {\it ``Operational methods and two-variable Laguerre polynomials"}, 
Atti Accad. Sci. Torino Cl. Sci. Fis. Mat. Natur. (1998), no. 132, 1 - 7.

\bibitem{Dunkl} Dunkl, C.F.; Xu, Y.: {\it ``Orthogonal Polynomials of Several Variables"}. 
Cambridge University Press, New York, 2001.

\bibitem{Bateman} Erd\'elyi, A.; Magnus, W.; F. Oberhettinger, F., and Tricomi. F.G.: {\it ``Higher transcendental 
functions"}; vol. 1. McGraw-Hill Book Company, Inc., New York-Toronto-London, 1953. 
Based, in part, on notes left by Harry Bateman.

\bibitem{Filaseta1} Filaseta, M.: {\it ``The irreducibility of all but finitely many Bessel polynomials"}. 
Acta Math. 174 (1995), 383 - 397.

\bibitem{Filaseta2} Filaseta, M.: {\it ``A generalization of an irreducibility theorem of I. Schur''} in: Analytic Number Theory: 
Proc. of a Conference in Honor of Heini Halberstam, Vol. 1,B.C. Berndt, H.G. Diamond and A.J. Hildebrand (eds.), 
Birkh\"auser, Boston, (1996), 371 - 396.

\bibitem{Filaseta3} Filaseta, M.; Lam, T.-Y.: {\it ``On the irreducibility of the generalized Laguerre polynomials''}. 
Acta Arith. 105 (2002), no. 2, 177 - 182.

\bibitem{Krylov} Krylov, N. A.; Li, Z.: {\it ``A Congruence Property of Irreducible Laguerre Polynomials 
in Two Variables"}. INTEGERS 16 (2016), \#A 57.

\bibitem{Schur} Schur, I.: {\it ``Einige S\"atze\"uber Primzahlen mit Anwendungen auf Irreduzibilit\"atsfragen,  I, II"}. 
Sitzungsberichte Akad. Berlin 1929, 370 - 391 (1929), 125 - 136.

\bibitem{Sell} Sell, E. A.: {\it ``On a certain family of generalized Laguerre polynomials"},
J. Number Theory 107 (2004), no. 2, 266 - 281. 

\bibitem{Suetin} Suetin, P.K.: {\it ``Orthogonal Polynomials in Two Variables" (Analytical methods and special functions; vol. 3)}. 
Gordon and Breach Science Publishers , 1999.

\bibitem{Szego} Szeg\"o, G.: {\it ``Orthogonal Polynomials" (4th ed.)}. Amer. Math. Soc. Colloq. Publ. 23,
Amer. Math. Soc.; Providence, RI, 1975.



\end{thebibliography}
\end{document}